\documentclass[11pt, twoside, leqno]{article}

\usepackage{amsthm}
\usepackage{amssymb}
\usepackage{amsmath}
\usepackage{mathrsfs}

\usepackage{graphics,color}
\usepackage[Symbol]{upgreek}
\usepackage{amscd,epsfig,esint}
\usepackage{verbatim}
\usepackage[english]{babel}

\allowdisplaybreaks \pagestyle{myheadings}
\pagestyle{myheadings}
\markboth{{\sc A. Clop, R. Jiang, J. Mateu and J. Orobitg}}
{\sc Transport Equation }

\textwidth=15cm
\textheight=21.0cm
\oddsidemargin 0.45cm
\evensidemargin 0.45cm

\parindent=0pt

\def\rr{{\mathbb R}}
\def\rn{{{\rr}^n}}
\def\rtwo{{\mathbb R}^2}

\def\div{{\mathrm{div}}}

\def\loc{{\mathop\mathrm{\,loc\,}}}

\def\ez{\epsilon}

\def\la{\langle}
\def\ra{\rangle}


\newtheorem{thm}{Theorem}
\newtheorem{lem}[thm]{Lemma}

\newtheorem{defn}[thm]{Definition}

\newtheorem{rem}[thm]{Remark}

\begin{document}
\arraycolsep=1pt
\author{} \arraycolsep=1pt
\arraycolsep=1pt

\title{\bf\Large A Note on Transport Equation \\
in Quasiconformally Invariant Spaces \footnotetext{\hspace{-0.35cm}
2010 \emph{Mathematics Subject Classification}. Primary 35F05;
Secondary 35F10.
\endgraf
{\it Key words and phrases}. transport equation, $\operatorname{BMO}$, vector fields,
quasiconformal mapping }}
\author{\it Albert Clop, Renjin Jiang, Joan Mateu \& Joan Orobitg}
\date{}

\maketitle

\begin{center}
\begin{minipage}{10cm}\small
{\noindent{\bf Abstract} In this note, we study the well-posedness
of the Cauchy problem for the transport equation in the $\operatorname{BMO}$ space
and certain Triebel-Lizorkin spaces.}
\end{minipage}

\end{center}

\vspace{0.2cm}

\section{Introduction}

\noindent
In fluid mechanics, the Euler equation
$$\begin{cases}
\dfrac{d v}{dt} + v\cdot \nabla v = 0,\\
\div( v)=0
\end{cases}
$$
describes the motion of an incompressible, inviscid fluid with
velocity $v:[0,T]\times \rn\to\rn$ whose initial state
$v(0,\cdot)=v_0$ is given. When $n=2$, one can reformulate the
system in scalar terms. Namely, one uses the \emph{vorticity}
$\omega:[0,T]\times\rtwo\to \rr$, which is the scalar curl of
$v=(v_1,v_2)$,
$$\omega = \frac{\partial v_2}{\partial x_1}-\frac{\partial v_1}{\partial x_2}.$$
The Biot-Savart law then makes it possible to recover $v(t,\cdot)$ from the vorticity $\omega(t,\cdot)$ by means of the convolution with the complex valued kernel $\frac{i}{2\pi\,\overline{z}}$. Moreover, one obtains for $\omega$ the following equation,
\begin{equation}\label{vorticity}
\frac{d\omega}{dt} + v\cdot \nabla\omega=0,\hspace{1cm}\text{where }v(t,\cdot)=\frac{i}{2\pi\overline{z}}\ast\omega(t,\cdot),
\end{equation}
together with the initial condition $\omega(0,\cdot)=\frac{i}{2\pi
\overline{z}}\ast v(0,\cdot)$. This can be seen as a \emph{scalar}
transport equation for $\omega$, still nonlinear because the
velocity field $v$ depends on the unknown $\omega$. Under the
assumption $\omega_0\in L^1\cap L^\infty$, Yudovich proved global
existence and uniqueness of solutions $\omega\in
L^\infty(0,T;L^\infty)$ (cf. \cite{Yu63},\cite{Yu95},\cite[Chapter 8]{mb02}). In the recent
years, there has been many attempts to understand the case of
unbounded vorticities. Particular attention is devoted to spaces
that stay close to $\operatorname{BMO}$, the space of functions of \emph{bounded
mean oscillation}. This space arises naturally since it contains the
image of $L^\infty$ under any Calder\'on-Zygmund singular integral
operator. Notice that, according to Bourgain-Li \cite{bl15},
\eqref{vorticity} is strongly ill-posed in the borderline space
$W^{1,2}(\mathbb{R}^2)$, while the equation \eqref{vorticity} is not
completely understood in $\operatorname{BMO}$. Recently Bernicot-Keraani
\cite{bk14} extended the well-posedness of \eqref{vorticity} to a
sub-class of $\operatorname{BMO}$, which in particular contains unbounded
vorticities; see also \cite{bh14,cmz13} for further developments.
\\

\noindent Scalar nonlinear transport equations do not only arise
from the Euler equation. Other examples include the surface
quasigeostrophic equation, and the aggregation equation. The general
model is
\begin{equation}\label{nonlineartransport}
\begin{cases}
\dfrac{du}{dt}  + b_u\cdot\nabla u =0\\
u(0,\cdot)=u_0
\end{cases}
\end{equation}
with unknown $u:[0,T]\times \rn\to\rr$.  The nonlinearity comes from the velocity field $b=b_u$,
which may depend on the unknown $u$.
\\

\noindent To study the nonlinear Cauchy problem
\eqref{nonlineartransport}, one of the methods is to first deal with
the corresponding linear problem, i.e., $b$ is independent of $u$.
For example, in the Euler equation, one can first find a suitable
condition on $b$ to solve the linear case and then use the explicit
formula of $b$ in terms of the solution $u$ to play the compactness
argument; see \cite{bk14} for instance.
\\

\noindent Our central problem here is to find suitable conditions on
the vector field $b$ to solve the Cauchy problems for the linear
transport equation with initial value in $\operatorname{BMO}$. In the case of bounded $u_0$, these problems
were successfully treated with the DiPerna-Lions scheme (cf. \cite{dl89})
 and the notion of renormalized solution, as well as the more
recent extensions by Ambrosio (cf. \cite{a04}) in the bounded
variation setting. In both approaches, the starting point is
the classical Cauchy-Lipschitz theory, which allows to write the
solution $u=u(t,x)$ of
\begin{equation}\label{lineartransport}
\begin{cases}
\dfrac{d}{dt}\,u +b\cdot\nabla u =0\\
u(0,\cdot)=u_0
\end{cases}
\end{equation}
as the composition
\begin{equation}\label{composition}
u(t,x)=u_0\circ\phi_t(x)
\end{equation}
where $\phi_t:\rn\to\rn$ is the flow generated by the velocity field $b$,
\begin{equation}\label{flow}
\begin{cases}
\dfrac{d}{dt}\phi_t(x)=-b(\phi_t(x)),\\\phi_0(x)=x,
\end{cases}
\end{equation}
if the vector field $b$ is smooth enough and autonomous. Towards finding explicit solutions
$u\in L^\infty(0,T; \operatorname{BMO})$ of the problem \eqref{lineartransport} for
a given $u_0\in \operatorname{BMO}$, there are two things to be analyzed. First,
describing the class $\cal{Q}$ of homeomorphisms $\phi_t$ under
which \eqref{composition} defines a bounded operator in $\operatorname{BMO}$. Second, describing the class of
velocity fields $b$ such that \eqref{flow} has a solution $\phi_t$ that
falls into $\cal{Q}$. Both questions were analyzed by Reimann \cite{re74,re76} in the 70's.
In the first case (cf. \cite{re74}), quasiconformality was found to be the fundamental notion.
In the second (cf. \cite{re76}), uniform bounds for the anticonformal part of $Db$ were proven to be enough.
\\

The novelty of this work is to apply some known results from
quasiconformal theory to the transport equation with initial data in
quasiconformally invariant spaces.
\\

For a vector field $b:[0,T]\times\rn\to \rn$ being such that $b\in
L^1(0,T;W^{1,1}_\loc)$, let $Db$ be the gradient matrix of $b$ and
$$S_Ab :=\dfrac{1}{2}(Db + Db^{t})-\dfrac{\mathrm{div}\,b}{n}\, I_{n\times n}$$ the anticonformal part of $Db$.
Let us mention that if $S_Ab(t,\cdot)$ is in $L^{\infty}(\rn)$ then $b(t,\cdot)$ is in the Zygmund class \cite{re76}.
Our main result is the following.

\begin{thm}\label{main1}
Let $b:[0,T]\times\rn\to \rn$ be such that $b\in L^1(0,T;W^{1,1}_\loc)$ and
\begin{equation}\label{mainlog}
 \frac{b(t,x)}{1+|x|\log^+|x|}\in L^1(0,T;L^\infty).
\end{equation}

If $S_Ab\in L^1(0,T;L^\infty)$, then for each $u_0\in \operatorname{BMO}$, the
problem \eqref{lineartransport} admits a unique weak solution $u\in
L^\infty(0,T;\operatorname{BMO})$. Moreover, for each $t\in (0,T]$, it holds
$$\|u\|_{L^\infty(0,T;\operatorname{BMO})}\le C(T,b)\|u_0\|_{\operatorname{BMO}}.$$
\end{thm}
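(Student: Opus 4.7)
The plan is to realize the solution through the Lagrangian flow and then invoke Reimann's two results on quasiconformality: the 1976 theorem that controls the conformal distortion of a flow by the anticonformal part of its generator, and the 1974 theorem saying that composition with a quasiconformal homeomorphism preserves $\operatorname{BMO}$. The two hypotheses on $b$ play complementary roles: condition \eqref{mainlog} is a log-linear growth bound which, combined with the Osgood-type regularity coming from $S_Ab\in L^1(0,T;L^\infty)$, will give a globally defined flow; the bound on $S_Ab$ will then upgrade the flow to a quasiconformal map of $\rn$ with explicit distortion.

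\textbf{Flow construction.} First I would solve the ODE \eqref{flow}. As remarked in the paper, $S_Ab(t,\cdot)\in L^\infty$ forces $b(t,\cdot)$ into the Zygmund class; in particular the spatial modulus of continuity of $b$ is essentially $r\log(1/r)$, integrably in $t$. An Osgood-Gronwall argument, together with the growth hypothesis $|b(t,x)|\le C(t)(1+|x|\log^+|x|)$ with $C\in L^1(0,T)$, rules out blow-up of trajectories in finite time and gives a unique global flow $\phi_t:\rn\to\rn$, with $\phi_t$ a homeomorphism for every $t\in[0,T]$ and $(t,x)\mapsto\phi_t(x)$ continuous.

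\textbf{Quasiconformality and the \emph{a priori} bound.} Next, by Reimann \cite{re76}, since $S_Ab\in L^1(0,T;L^\infty)$, the map $\phi_t$ is $K(t)$-quasiconformal with
\[
 K(t)\le \exp\Big(C_n\int_0^t\|S_Ab(s,\cdot)\|_{L^\infty}\,ds\Big).
\]
Then by Reimann \cite{re74}, composition with a $K$-quasiconformal homeomorphism is a bounded operator on $\operatorname{BMO}(\rn)$, with operator norm depending only on $K$ and $n$. Defining
\[
 u(t,x):=u_0(\phi_t(x)),
\]
one immediately obtains $u\in L^\infty(0,T;\operatorname{BMO})$ together with the stated estimate $\|u\|_{L^\infty(0,T;\operatorname{BMO})}\le C(T,b)\|u_0\|_{\operatorname{BMO}}$. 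Verifying that this $u$ is a weak solution of \eqref{lineartransport} is routine once $\phi_t$ is known to be an absolutely continuous flow; one approximates $u_0$ by smooth functions in the weak-$*$ $\operatorname{BMO}$ topology, uses that for smooth data $u_0\circ\phi_t$ solves \eqref{lineartransport} classically, and passes to the limit in the distributional formulation using the quasiconformal regularity to justify the Jacobian change of variables against test functions.

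\textbf{Uniqueness.} This I expect to be the main obstacle, as the DiPerna–Lions machinery does not apply directly to $\operatorname{BMO}$ data. My plan is to take the difference $w=u_1-u_2$ of two weak solutions with the same initial datum and to transport it \emph{backward} along the flow. Concretely, the map $\phi_t$ admits an inverse $\phi_t^{-1}$ which is itself quasiconformal with comparable distortion, hence absolutely continuous with Jacobian in $L^1_{\loc}$. Pairing the equation against test functions of the form $\varphi(s,\phi_s^{-1}(x))$, with $\varphi\in C^\infty_c$, and using the regularity of $\phi$ to carry out a change of variables, one reduces the equation for $w$ to $\partial_t(w\circ\phi_t^{-1})=0$ in the distributional sense. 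Since $w(0,\cdot)=0$, this forces $w\equiv 0$. The delicate point is to justify the composition $w\circ\phi_t^{-1}$ at the level of $\operatorname{BMO}$ functions and weak solutions; this will require exploiting Reimann's commutator-type estimates and a careful cutoff-and-mollification argument to handle the lack of global integrability of $\operatorname{BMO}$ functions, which is where I expect the technical heart of the proof to lie.
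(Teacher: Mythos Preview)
Your existence argument is essentially the paper's: construct the (backward) quasiconformal flow via Reimann \cite{re76}, compose, and invoke Reimann \cite{re74} for the $\operatorname{BMO}$ bound. The paper verifies that $u_0\circ\tilde\phi_{0,t}$ is a weak solution by a direct Jacobian computation using $\frac{d}{dt}J_{\phi_{0,t}}=\div(b)\,J_{\phi_{0,t}}$, rather than your weak-$\ast$ approximation of $u_0$, but both are standard.

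The real divergence is in uniqueness, and here you have overlooked the simplifying observation that makes the DiPerna--Lions route available after all. From \cite[p.~262]{re76}, the assumption $S_Ab\in L^1(0,T;L^\infty)$ forces $Db\in L^\infty(0,T;L^q_{\loc})$ for every finite $q$. This is more than enough to run the commutator estimate: if $u\in L^\infty(0,T;\operatorname{BMO})$ solves the equation with $u_0=0$, then $u_\epsilon=u\ast\rho_\epsilon$ satisfies $\partial_t u_\epsilon+b\cdot\nabla u_\epsilon=r_\epsilon$ with $r_\epsilon\to 0$ in $L^1(0,T;L^1_{\loc})$. Renormalizing with $\beta_M(t)=|t|\wedge M$ produces a \emph{bounded} weak solution $\beta_M(u)$ with zero initial data, and then one simply quotes $L^\infty$ well-posedness (the paper cites \cite{cjmo15}) to conclude $\beta_M(u)=0$ for every $M$, hence $u=0$.

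Your proposed Lagrangian route---pairing against $\varphi(s,\phi_s^{-1}(x))$ and reducing to $\partial_t(w\circ\phi_t^{-1})=0$---is in principle sound, but as you yourself note it is technically heavier: $\varphi(s,\phi_s^{-1}(\cdot))$ is not an admissible $C^\infty_c$ test function (it has only quasiconformal regularity in $x$), so one must approximate and control commutators anyway, at which point you are back to the DiPerna--Lions circle of ideas. The paper's truncation trick sidesteps this entirely by reducing the $\operatorname{BMO}$ uniqueness question to the already-known $L^\infty$ one.
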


\noindent The proof of existence is based
on the fact that \eqref{flow} can be found a unique solution
$\phi_t$ consisting of quasiconformal mappings, which preserve $\operatorname{BMO}$ by
composition. Indeed, it is precisely the assumption $S_Ab \in
L^1(0,T;L^\infty)$ what allows for a classical compactness argument
in $\cal{Q}$. Uniqueness follows as a consequence of renormalization
properties of solutions to the transport equations; see
\cite{dl89,a04}.
\\

\noindent It is a classical fact for harmonic analysts that  $\operatorname{BMO}$
can be identified with the homogeneous Triebel-Lizorkin space
$\dot{F}^0_{\infty,2}$. As it was proven by \cite{kyz11} (see also
\cite{ReRi}), the homogeneous spaces $\dot{F}^\theta_{p,q}$ are
quasiconformally
invariant provided that $\theta p = n$ and $q> \frac{n}{n+\theta}$. As a consequence,
we obtain well-posedness of \eqref{lineartransport} also in these spaces (see Theorem \ref{wp-tran-tls}).
Moreover, well-posedness also holds in the homogeneous Sobolev spaces
$\dot{W}^{1,n}$ (see Theorem \ref{wp-tran-tls}).\\

The paper is organized as follows.  In Section 2 we show that for
vector fields  $b$ satisfying the requirements from Theorem
\ref{main1} the corresponding flow $\phi_t$ from \eqref{flow} is a
quasiconformal mapping for each $t$. The argument is based on Reimann's
approach from \cite{re76}, but we also relax the condition $S_Ab\in
L^\infty(0,T;L^\infty)$ from \cite{re76} to $S_Ab\in
L^1(0,T;L^\infty)$. In Section 3, we prove Theorem \ref{main1}, and
in the last section we address the Cauchy problem for the transport
equation in some Triebel-Lizorkin spaces.

\section{Flows of quasiconformal mappings}
\hskip\parindent In this section, we deal with the flows of
quasiconformal maps. The idea of this section is similar to Reimann
\cite{re76}.
\begin{lem}\label{diff-anti}
If $b:\,\rn\mapsto\rn$ is differentiable at $x$ and
$|S_Ab(x)|<\infty$, then
$$\limsup_{y,z\to 0,\, 0<|z|,|y|}\left|\frac{\la y,(b(x+y)-b(x))\ra}{|y|^2}-\frac{\la z,(b(x+z)-b(x))\ra}{|z|^2}\right|\le 2|S_Ab(x)|.$$
\end{lem}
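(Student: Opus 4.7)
The plan is to use the differentiability of $b$ at $x$ together with a linear-algebra decomposition of $Db(x)$ into its conformal and anticonformal parts, the conformal part of which is invisible to the difference of the two quotients.

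First I would write, using differentiability,
\begin{equation*}
b(x+y)-b(x)=Db(x)\,y+R(y), \qquad \frac{|R(y)|}{|y|}\to 0 \text{ as } y\to 0,
\end{equation*}
and similarly for $z$. Substituting this into each quotient gives
\begin{equation*}
\frac{\la y,b(x+y)-b(x)\ra}{|y|^2}=\frac{\la y,Db(x)\,y\ra}{|y|^2}+\frac{\la y,R(y)\ra}{|y|^2},
\end{equation*}
and the error term is $o(1)$ as $y\to 0$ by Cauchy--Schwarz. Next, since $\la y,Ay\ra=\la y,\tfrac12(A+A^t)y\ra$ for any matrix $A$, the antisymmetric part of $Db(x)$ contributes nothing, so the leading term equals $\la y,\tfrac12(Db+Db^t)(x)y\ra/|y|^2$.

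Then I would use the defining decomposition
\begin{equation*}
\tfrac{1}{2}(Db+Db^t)(x)=S_Ab(x)+\frac{\mathrm{div}\,b(x)}{n}\,I_{n\times n},
\end{equation*}
which yields
\begin{equation*}
\frac{\la y,Db(x)y\ra}{|y|^2}=\frac{\la y,S_Ab(x)y\ra}{|y|^2}+\frac{\mathrm{div}\,b(x)}{n}.
\end{equation*}
The conformal piece $\mathrm{div}\,b(x)/n$ is independent of the direction $y$, so it cancels when subtracting the analogous expression for $z$. The difference therefore reduces, modulo $o(1)$ as $y,z\to0$, to
\begin{equation*}
\frac{\la y,S_Ab(x)y\ra}{|y|^2}-\frac{\la z,S_Ab(x)z\ra}{|z|^2}.
\end{equation*}

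Finally, since $S_Ab(x)$ is symmetric, the Rayleigh quotient $\la v,S_Ab(x)v\ra/|v|^2$ is bounded in absolute value by the operator norm $|S_Ab(x)|$. Applying this to $v=y$ and $v=z$ separately and using the triangle inequality bounds the displayed difference by $2|S_Ab(x)|$. Passing to $\limsup$ absorbs the $o(1)$ error and yields the claim. The only subtlety is verifying that $|S_Ab(x)|$ here denotes (or at least dominates) the operator norm of the symmetric matrix $S_Ab(x)$, which is the standard convention used in Reimann's framework \cite{re76}; the rest is elementary differential calculus and linear algebra, so I do not expect any serious obstacle.
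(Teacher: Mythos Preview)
Your argument is correct and in fact more direct than the paper's. The paper first invokes \cite[Proposition~13]{re76}, which gives the bound $2|S_Ab(x)|$ only for the comparison at equal radii, i.e.\ for
\[
\left|\frac{\la y,b(x+y)-b(x)\ra}{|y|^2}-\frac{\la \tfrac{|y|}{|z|}z,\,b(x+\tfrac{|y|}{|z|}z)-b(x)\ra}{|y|^2}\right|,
\]
and then uses differentiability in a second step to show that replacing $\tfrac{|y|}{|z|}z$ by $z$ introduces an $o(1)$ error. You instead exploit differentiability from the outset to linearize both quotients, observe that the antisymmetric part of $Db(x)$ drops out of each quadratic form and that the scalar part $\tfrac{\div b(x)}{n}I$ cancels in the difference, leaving only the Rayleigh quotients of the symmetric matrix $S_Ab(x)$. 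This bypasses the external reference entirely and makes the role of $S_Ab$ transparent. The paper's route, by contrast, isolates the equal-radius case as a separate ingredient (Reimann's proposition does not require differentiability), which is useful elsewhere in \cite{re76} but is not needed under the present hypotheses. Your only stated caveat---that $|S_Ab(x)|$ must dominate the operator norm---is harmless: for a symmetric matrix the operator norm coincides with the maximum modulus of the Rayleigh quotient, and any of the standard matrix norms used in this context bounds it.
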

\begin{proof}
By \cite[Proposition 13]{re76}, it holds that
\begin{eqnarray*}
\limsup_{y\to0}\left|\frac{\la y,(b(x+y)-b(x))\ra}{|y|^2}- \frac{\la
\frac{|y|}{|z|}z,(b(x+\frac{|y|}{|z|}z)-b(x))\ra}{|y|^2}\right|\le
2|S_Ab(x)|.
\end{eqnarray*}
On the other hand, by the differentiability of $b$, we can further
deduce that
\begin{eqnarray*}
&&\lim_{y,z\to 0,\, 0<|z|,|y|}\left|\frac{\la z,(b(x+z)-b(x))\ra}{|z|^2}-
\frac{\la\frac{|y|}{|z|}z,(b(x+\frac{|y|}{|z|}z)-b(x))\ra}{|y|^2}\right|\\
&&=\lim_{y,z\to 0,\, 0<|z|,|y|}\left|\frac{\la z,Db(x)z+o(|z|)\ra}{|z|^2}
-\frac{\la \frac{|y|}{|z|}z,Db(x)\frac{|y|}{|z|}z+o(|y|))\ra}{|y|^2}\right|\\
&&=0.
\end{eqnarray*}
The above two estimates give the desired conclusion.
\end{proof}

\begin{defn}[Distortion]
Let $\phi:\rn\mapsto\rn$ be a homeomorphism. For each $x\in\rn$ and
each $r>0$, define
$$L_{\phi}(x,r)=\sup_{y:\,|y-x|=r}|\phi(y)-\phi(x)|,$$
and
$$\ell_{\phi}(x,r)=\inf_{y:\,|y-x|=r}|\phi(y)-\phi(x)|.$$
We then define the linear distortion function as
$$H_{\phi}(x):=\limsup_{r\to 0} \frac{L_{\phi}(x,r)}{\ell_{\phi}(x,r)}.$$
\end{defn}
A homeomorphism $\phi:\rn\mapsto\rn$ is called a quasiconformal mapping,
if there exists $H>0$ such that  the distortion
$H_{\phi}(x)\le H$ for all $x\in \rn$. Notice that this (metric) definition
coincides with the usual (analytic) definition of $K$-quasiconformal mapping.
Recall that a homeomorphism $\phi:\rn\mapsto\rn$ is called a $K$-quasiconformal mapping, if
$\phi\in W^{1,1}_\loc(\rn)$ with $|D\phi(x)|^n\le K_\phi J_\phi(x)$ for a.e. $x\in\rn$.
Then for any $K$-quasiconformal mapping $\phi$, it holds $K_\phi^{\frac 1{n-1}}\le H_\phi(x)\le K_\phi$ almost everywhere. See the book
\cite{im01} for more information on quasiconformal mappings in $\rn$.

\begin{thm}\label{pri-ptw}
Let $b(t,x):\,[0,T]\times\rn\mapsto \rn$ be a vector field in $L^1(0,T;W^{1,1}_\loc)$
and $b(t,\cdot)\in C^2(\rn)$ for each $t\in [0,T]$.
Assume that $b$ satisfies \eqref{mainlog}
and $S_Ab\in L^1(0,T;L^\infty).$ Then there exists a unique flow of
quasiconformal mappings $\phi_t(x)$ satisfying
$$
\dfrac{d}{dt}\, \phi_t(x)=b(t,\phi_t(x)),\quad \text{ a.e. $t\in [0,T]$,} \qquad \phi_0(x)=x.
$$
Moreover, for each $x\in\rn$ and each $t\in [0,T]$, it holds that
$$H_{\phi_t}(x)\le \exp\left(\int_0^t2|S_Ab(s,\phi_s(x))|\,ds\right).$$
\end{thm}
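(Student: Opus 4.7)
The plan is to construct the flow by classical ODE theory using the $C^2$ regularity of $b(t,\cdot)$, and then to control the linear distortion $H_{\phi_t}(x)$ by tracking the logarithmic ratio of infinitesimal separations along pairs of trajectories, applying Lemma \ref{diff-anti} pointwise in time.

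First I would invoke Cauchy--Lipschitz: since $b(t,\cdot)\in C^2(\rn)$, every initial point $x$ admits a unique local trajectory $t\mapsto\phi_t(x)$. To globalize in time, I would rewrite the growth hypothesis \eqref{mainlog} as $|b(t,y)|\le c(t)(1+|y|\log^+|y|)$ with $c\in L^1(0,T)$, so that $R(t):=1+|\phi_t(x)|$ satisfies $R'(t)\le c(t)\,R(t)\log^+R(t)$; an Osgood-type integration yields a double-exponential bound on $R(t)$ and rules out finite-time blow-up. Continuous dependence on initial data combined with time reversibility makes each $\phi_t$ a homeomorphism of $\rn$, and uniqueness of the flow is built into Cauchy--Lipschitz.

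Next, fix $x$ and take $y,z$ near $x$ with $|y-x|=|z-x|$. Writing $\eta_y(t):=\phi_t(y)-\phi_t(x)$ and $\eta_z(t)$ analogously, a direct computation would give
\begin{equation*}
\frac{d}{dt}\log\frac{|\eta_y(t)|}{|\eta_z(t)|}=\frac{\la \eta_y,\,b(t,\phi_t(x)+\eta_y)-b(t,\phi_t(x))\ra}{|\eta_y|^2}-\frac{\la \eta_z,\,b(t,\phi_t(x)+\eta_z)-b(t,\phi_t(x))\ra}{|\eta_z|^2}.
\end{equation*}
Applying Lemma \ref{diff-anti} at $\phi_s(x)$ with the small vectors $\eta_y(s),\eta_z(s)$---which tend to zero as $y,z\to x$ by continuous dependence---would bound this in the limit by $2|S_Ab(s,\phi_s(x))|$. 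I would then integrate from $0$ to $t$, use the initial equality $|\eta_y(0)|=|\eta_z(0)|$, take $\sup$ over $y$ and $\inf$ over $z$ on the sphere of radius $r$ about $x$, and let $r\to 0$, to obtain
$$H_{\phi_t}(x)\le\exp\lf(\int_0^t 2|S_Ab(s,\phi_s(x))|\,ds\r).$$
Since $S_Ab\in L^1(0,T;L^\infty)$, this bound is uniform in $x$, and therefore $\phi_t$ is quasiconformal on $\rn$.

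The hard part will be justifying the passage to the limit $y,z\to x$ under the time integral, because Lemma \ref{diff-anti} supplies only a pointwise limsup whose threshold of smallness depends on the pair $(s,\phi_s(x))$. Each individual quotient $\la \eta,\,b(\phi_s(x)+\eta)-b(\phi_s(x))\ra/|\eta|^2$ may itself blow up logarithmically, since $S_Ab(s,\cdot)\in L^\infty$ forces only Zygmund (hence log-Lipschitz) regularity on $b(s,\cdot)$; their \emph{difference}, however, remains controlled by $2|S_Ab(s,\phi_s(x))|$, which is integrable in $s$ by hypothesis. A reverse-Fatou or dominated-convergence argument, exploiting the uniform compact containment of trajectories over $[0,T]$ inherited from the growth step, is what should legitimize the interchange of the limit with the integral and close the estimate.
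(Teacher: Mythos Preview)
Your proposal is correct and follows essentially the same route as the paper: classical ODE existence via the $C^2$ hypothesis, global-in-time control from \eqref{mainlog} via a Gronwall/Osgood/Bihari-type argument, the derivative computation for $\log\bigl(|\eta_y|/|\eta_z|\bigr)$ yielding exactly the paper's formula \eqref{est-dilation}, and finally Lemma~\ref{diff-anti} combined with a reverse-Fatou step to pass the $\limsup_{|y|=|z|\to0}$ inside the time integral. You are, if anything, more explicit than the paper about the delicacy of that last interchange; the paper simply writes the inequality $\limsup\exp\bigl(\int\cdots\bigr)\le\exp\bigl(\int\limsup\cdots\bigr)$ and moves on, whereas you correctly flag that a dominating function is needed---in the $C^2$ setting this comes from Taylor-expanding each quotient about $\phi_s(x)$, where the leading terms already differ by at most $2|S_Ab(s,\phi_s(x))|$ (the trace part cancels on unit vectors) and the remainder is controlled locally.
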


\begin{proof}
 The existence and uniqueness of a flow $\phi_t(x)$ satisfying
$$\dfrac{d}{dt}\,  \phi_t(x)=b(t,\phi_t(x)) \quad \text{ a.e.  $t\in [0,T]$,}$$
is a classical result; see \cite{ha80}. Moreover, for each $t\in [0,T]$, the flow
$$\phi_t(x)=x+\int_0^tb(s,\phi_s(x))\,ds$$
is a locally Lipschitz homeomorphism of $\rn$ and preserves the class of sets of measure zero.
By \eqref{mainlog} one has
$$
|\phi_t(x)| \le |x| +\int_{0}^{t} \left\| \frac{b(s,\cdot)}{1+|\cdot|\log^+|\cdot|} \right\|_{\infty}
(1+|\phi_s(x)| |\log^+|\phi_s(x)|) \, ds.
$$
Then, using a Gronwall type inequality  due to I. Bihari (see \cite[p. 3]{dr03})
one gets
\begin{equation}\label{bounduni}
 |\phi_t(x)| \le C(R,b)\quad \text{for all } (t,x)\in [0,T]\times B(0,R),
\end{equation}
where $C(R,b)$ is a constant depending on the radius $R$ and $\displaystyle \int_0^{T} \left\| \frac{b(s,\cdot)}{1+|\cdot|\log^+|\cdot|} \right\|_{\infty} \, ds$,
that is, $\phi_t$ maps bounded sets into bounded sets in finite time.

Let $x\in \rn$ and $t\in [0,T]$ be fixed. For each $y,z\in B(0,1)$,
$|y|=|z|\neq 0$, define
$$A(t,x)=\phi_t(x+y)-\phi_t(x)$$
$$B(t,x)=\phi_t(x+z)-\phi_t(x),$$
$$D(t,x)=b(t,\phi_t(x+y))-b(t,\phi_t(x)),$$
$$E(t,x)=b(t,\phi_t(x+z))-b(t,\phi_t(x)).$$
and set
$$H_{y,z}(t,x)=\frac{|A(t,x)|}{|B(t,x)|}.$$
Because, for each $t\in [0,T]$, $\phi_t$ is a homeomorphism of $\rn$,  the quantity $H_{y,z}(t,x)$ is well defined.
It is clear from the definiton that $\log H_{y,z}(t,x)$ as function of $t$ is absolutely continuous on $[0,T]$.
For $|s|$ small enough such that $t+s\in [0,T]$, one has
\begin{eqnarray*}
H_{y,z}(t+s,x)-H_{y,z}(t,x)&&=\frac{|A(t+s,x)|}{|B(t+s,x)|}-\frac{|A(t,x)|}{|B(t,x)|}\\
&&=\frac{|A(t,x)|}{|B(t+s,x)|}\left\{\frac{|A(t+s,x)|}{|A(t,x)|}-\frac{|B(t+s,x)|}{|B(t,x)|}\right\}\\
&&=\frac{|A(t,x)|}{|B(t+s,x)|}\frac{\left\{\frac{|A(t+s,x)|^2}{|A(t,x)|^2}-\frac{|B(t+s,x)|^2}{|B(t,x)|^2}\right\}}
{\left\{\frac{|A(t+s,x)|}{|A(t,x)|}+\frac{|B(t+s,x)|}{|B(t,x)|}\right\}},
\end{eqnarray*}
and therefore,
\begin{eqnarray*}
\frac{H_{y,z}(t+s,x)}{H_{y,z}(t,x)}&&=\frac{|B(t,x)|}{|B(t+s,x)|}\frac{\left\{\frac{|A(t+s,x)|^2}{|A(t,x)|^2}-\frac{|B(t+s,x)|^2}{|B(t,x)|^2}\right\}}
{\left\{\frac{|A(t+s,x)|}{|A(t,x)|}+\frac{|B(t+s,x)|}{|B(t,x)|}\right\}}+1.
\end{eqnarray*}
Using that,
$$A(t+s,x)=A(t,x)+\int_t^{t+s}D(r,x)\,dr$$
$$B(t+s,x)=B(t,x)+\int_t^{t+s}E(r,x)\,dr,$$
we can conclude that for a.e. $t\in [0,T]$ it holds
\begin{eqnarray}\label{est-dilation}
\frac{d \log H_{y,z}(t,x)}{\,dt}&&=\lim_{s\to
0}\frac 1s\log\left(\frac{H_{y,z}(t+s,x)}{H_{y,z}(t,x)}\right)\nonumber\\
&&=
\lim_{s\to
0}\frac 1s\log\left(\frac{|B(t,x)|}{|B(t+s,x)|}\frac{\left\{\frac{|A(t+s,x)|^2}{|A(t,x)|^2}-\frac{|B(t+s,x)|^2}{|B(t,x)|^2}\right\}}
{\left\{\frac{|A(t+s,x)|}{|A(t,x)|}+\frac{|B(t+s,x)|}{|B(t,x)|}\right\}}+1\right)\nonumber\\
&&=\frac{\la A(t,x),D(t,x)\ra}{|A(t,x)|^2}-\frac{\la B(t,x),E(t,x)\ra}{|B(t,x)|^2}.
\end{eqnarray}

By the estimate \eqref{est-dilation}, we see that
\begin{eqnarray*}
 H_{y,z}(t,x)\le \exp\left\{\int_0^t\left|\frac{\la A(s,x),D(s,x)\ra}{|A(s,x)|^2}
 -\frac{\la B(s,x),E(s,x)\ra}{|B(s,x)|^2}\right|\,ds\right\}.
\end{eqnarray*}
Now, since  $\phi_t$ is locally Lipschitz continuous ($b(t,\cdot)\in C^2(\rn)$ for each $t\in [0,T]$),
we can apply Lemma \ref{diff-anti} to obtain
\begin{eqnarray*}
\limsup_{|y|=|z|\to 0} H_{y,z}(t,x)&&\le \limsup_{|y|=|z|\to 0}
\exp\left\{\int_0^t\left|\frac{\la A(s,x),D(s,x)\ra}{|A(s,x)|^2}-\frac{\la B(s,x),E(s,x)\ra}{|B(s,x)|^2}\right|\,ds\right\}\\
&&\le \exp\left\{\int_0^t \limsup_{|y|=|z|\to 0}\left|\frac{\la
A(s,x),D(s,x)\ra}{|A(s,x)|^2}
-\frac{\la B(s,x),E(s,x)\ra }{|B(s,x)|^2}\right|\,ds\right\}\\
&&\le  \exp\left\{\int_0^t \left|2S_Ab(s,\phi_s(x))\right|\,ds\right\},
\end{eqnarray*}
for all $x\in \rn$ and all $t\in [0,T]$. The proof is completed.
\end{proof}

\begin{thm}\label{qcfl}
Let $b(t,x):\,[0,T]\times\rn\mapsto \rn$ be a vector field in $L^1(0,T;W^{1,1}_\loc)$. Assume \eqref{mainlog}
and $S_Ab\in L^1(0,T;L^\infty).$ Then there exists a unique flow of
quasiconformal mappings  $\phi_t(x)$ satisfying
$$\dfrac{d}{dt} \, \phi_t(x) =b(t,\phi_t(x)),\quad \text{ a.e.  $t\in [0,T]$,} \qquad \phi_0(x)=x.$$
Moreover, for a.e. $x\in\rn$ and each $t\in [0,T]$, it holds that
$$K_{\phi_t} \le \exp\left((n-1) \int_0^t2\|S_Ab(s,\cdot)\|_{L^\infty}\,ds\right).$$
\end{thm}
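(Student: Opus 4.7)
The proof proceeds by spatially mollifying $b$, applying Theorem \ref{pri-ptw} to the smooth approximants, and passing to the limit. Let $\eta_\epsilon$ be a standard radial mollifier on $\rn$ and set $b^\epsilon(t,x):=(b(t,\cdot)*\eta_\epsilon)(x)$, so that $b^\epsilon(t,\cdot)\in C^\infty(\rn)$ for each $t$. Since $S_A$ is a first-order linear operator with constant coefficients, $S_Ab^\epsilon=(S_Ab)*\eta_\epsilon$, whence $\|S_Ab^\epsilon(t,\cdot)\|_\infty\le\|S_Ab(t,\cdot)\|_\infty$; thus $S_Ab^\epsilon\in L^1(0,T;L^\infty)$ uniformly in $\epsilon\in(0,1]$. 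Using the elementary inequality $1+|x-y|\log^+|x-y|\le C\bigl(1+|x|\log^+|x|\bigr)$ for $|y|\le 1$, the growth condition \eqref{mainlog} also passes to $b^\epsilon$ with a uniform constant.

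Theorem \ref{pri-ptw} then produces a unique quasiconformal flow $\phi^\epsilon_t$ of $b^\epsilon$ satisfying
$$H_{\phi^\epsilon_t}(x)\le \exp\Bigl(\int_0^t 2|S_Ab^\epsilon(s,\phi^\epsilon_s(x))|\,ds\Bigr)\le \exp\Bigl(\int_0^t 2\|S_Ab(s,\cdot)\|_\infty\,ds\Bigr)=:H_0(t).$$
Invoking the relation $K_\phi\le H_\phi^{n-1}$ recalled in the excerpt, each $\phi^\epsilon_t$ is quasiconformal with $K_{\phi^\epsilon_t}\le H_0(t)^{n-1}$. The estimate \eqref{bounduni}, whose constant depends only on the quantity appearing in \eqref{mainlog}, moreover provides $|\phi^\epsilon_t(x)|\le C(R,b)$ for all $|x|\le R$ and $t\in[0,T]$, uniformly in $\epsilon$.

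I would then apply the classical normal-family theorem for quasiconformal mappings of equibounded distortion: along a subsequence $\epsilon_j\downarrow 0$, $\phi^{\epsilon_j}_t$ converges locally uniformly to a limit $\phi_t$ which, by lower semicontinuity of the distortion under locally uniform convergence, is either quasiconformal with $K_{\phi_t}\le H_0(t)^{n-1}$ (the claimed bound) or constant. The constant alternative is ruled out by running the same compactness argument on the inverse flows $\psi^\epsilon_t:=(\phi^\epsilon_t)^{-1}$, which by time reversal satisfy an ODE of the same type and therefore inherit the same uniform distortion and growth bounds, forcing the limit $\phi_t$ to be a genuine homeomorphism of $\rn$. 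To identify $\phi_t$ with the flow of $b$, I pass to the limit in the integral identity $\phi^\epsilon_t(x)=x+\int_0^t b^\epsilon(s,\phi^\epsilon_s(x))\,ds$. Here sits the main technical obstacle: because $b(s,\cdot)$ is only in $W^{1,1}_\loc$, the composition convergence $b^{\epsilon_j}(s,\phi^{\epsilon_j}_s(x))\to b(s,\phi_s(x))$ must be justified. The crucial observation is that $S_Ab\in L^\infty$ places $b(t,\cdot)$ in the Zygmund class, so $b(t,\cdot)$ is continuous and $b^\epsilon\to b$ locally uniformly in $x$; combining this with the locally uniform convergence $\phi^{\epsilon_j}_s\to\phi_s$ and the uniform boundedness of $\phi^{\epsilon_j}_s$ on compacts gives pointwise-in-$s$ convergence, and the $L^1$-in-$s$ domination coming from \eqref{mainlog} together with \eqref{bounduni} lets dominated convergence close the argument.

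For uniqueness of the Lagrangian flow the same Zygmund regularity of $b(t,\cdot)$ yields log-Lipschitz spatial continuity with an $L^1$-in-time modulus, so a standard Osgood-type Gronwall argument applied to the difference of two candidate flows forces them to agree, completing the plan.
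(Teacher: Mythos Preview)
Your proposal is correct and follows essentially the same route as the paper: mollify $b$, invoke Theorem~\ref{pri-ptw} on the smooth approximants, obtain uniform distortion and growth bounds, extract a limit by compactness, pass to the limit in the integral equation using the Zygmund regularity of $b(t,\cdot)$, and settle uniqueness by an Osgood-type argument.

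One point where your sketch is looser than the paper's argument deserves attention. The normal-family theorem for $K$-quasiconformal maps yields, for each fixed $t$, a locally uniformly convergent subsequence; but to pass to the limit in $\int_0^t b^{\epsilon}(s,\phi^{\epsilon}_s(x))\,ds$ you need a \emph{single} subsequence along which $\phi^{\epsilon_j}_s\to\phi_s$ for all $s\in[0,T]$ simultaneously. The paper secures this by proving equicontinuity jointly in $(t,x)$ --- the time direction coming from $|\phi^{\epsilon}_t(x)-\phi^{\epsilon}_s(x)|\le C\int_s^t\bigl\|b(r,\cdot)/(1+|\cdot|\log^+|\cdot|)\bigr\|_\infty\,dr$ --- and then applying Arzel\`a--Ascoli on $[0,T]\times\rn$; you should make this step explicit. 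A minor stylistic difference: the paper rules out the constant limit directly from the fact that the limit already satisfies the ODE with $\phi_0=\mathrm{id}$, rather than by compactness of the inverse flows, but your argument works just as well.
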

\begin{proof}
Let $\rho\in C^\infty_c(B(0,1))$ be a non-negative smooth function
that satisfies $\int_\rn \rho(x)\,dx=1$. For each
$\ez>0$ let $\rho_\ez(x)=\rho(x/\ez)/\ez^n$  and
$$b_\ez(t,x):=\int_\rn b(t,x-y)\rho_\ez(y)\,dy.$$
Then for each $\ez>0$, $b_\ez$ satisfies the requirements from Theorem \ref{pri-ptw},
and therefore there exists a unique flow of quasiconformal maps $\phi_{t,\ez}$ that satisfies
$$\dfrac{d}{dt} \, \phi_{t,\ez}(x) =b_\ez(t,\phi_{t,\ez}(x)), \quad \text{ a.e.  $t\in [0,T]$,}\qquad \phi_{0,\ez}(x) = x .$$
Moreover, since
\begin{eqnarray*}
\left|S_Ab_\ez(s,\phi_{s,\ez}(x))\right|&&\le
\int_\rn
\left|S_Ab(s,\phi_{s,\ez}(x)-y)\right|\rho_\ez(y)\,dy\le \|S_Ab(s,\cdot)\|_{L^\infty},
\end{eqnarray*}
we have that the linear distortion function $H_{\phi_{t,\ez}}$ of
$\phi_{t,\ez}$ satisfies
\begin{eqnarray}\label{boundQC}
H_{\phi_{t,\ez}}(x)&&\le
\exp\left(\int_0^t2|S_Ab_\ez(s,\phi_{s,\ez}(x))|\,ds\right)\le \exp\left(\int_{0}^{t}2\|S_Ab(s,\cdot)\|_{L^\infty}\,ds\right).
\end{eqnarray}
Notice that  from \eqref{mainlog} and the argument used to obtain \eqref{bounduni} we have that
for any bounded set $U\subset\rn$, $\phi_{t,\ez}(U)$ is uniformly bounded in $\rn$ for any $t\in [0,T]$ and any $\ez<1$.
That is, $\phi_{t,\ez}(U)\subset B(0,R)$ where the radius $R$ depends on $U$ and $b$.
Moreover, by \eqref{boundQC}, for all $t\in [0,T]$ and $\ez <1$, the map $\phi_{t,\ez}$ is $K$-quasiconformal with
$$K_{\phi_{t,\ez}} \le (H_{\phi_{t,\ez}}(x))^{n-1} \le \exp\left((n-1) \int_{0}^{t}2\|S_Ab(s,\cdot)\|_{L^\infty}\,ds\right).
$$
So, the family $\{ \phi_{t,\ez}\}_{0<\ez<1}$ is locally equicontinuous in the spatial direction. On the other hand, if $x\in U$
$$
\left|\phi_{t,\ez}(x)-\phi_{s,\ez}(x)\right|\le C(U,T)\int_s^t\left\|\frac{b(r,\cdot)}{1+|\cdot|\log^+|\cdot|}\right\|_{\infty} \,dr.
$$
Therefore, we can conclude that $\phi_{t,\ez}(x)$ is locally uniformly bounded and equicontinuous  in $[0,T]\times \rn$.
Applying the Arzel\`a-Ascoli theorem, we achieve  that $\phi_{t,\ez}$ converges to some $\phi_t$ locally uniformly up
to a subsequence. To get
\begin{equation}\label{edo1}
\phi_t(x)=x+\int_0^t b(s,\phi_s(x))\,ds
\end{equation}
from
$$\phi_{t,\ez}(x)=x+\int_0^tb_{\ez}(s,\phi_{s,\ez}(x))\,ds,
$$
it is enough to  prove that for each $x\in \rn$
$$
\int_0^tb_{\ez}(s,\phi_{s,\ez}(x))\,ds \longrightarrow \int_0^tb(s,\phi_s(x))\,ds \qquad \text{as } \ez\to 0.
$$
Thus, we split
\begin{eqnarray*}
\int_0^t |b_{\ez}(s,\phi_{s,\ez}(x)) -  b(s,\phi_s(x)) | \,ds && \le  \int_0^t |b_{\ez}(s,\phi_{s,\ez}(x)) -  b_{\ez}(s,\phi_s(x)) | \,ds \\
&& + \int_0^t |b_{\ez}(s,\phi_s (x)) -  b(s,\phi_s(x)) | \,ds := I + II.
\end{eqnarray*}
Recall that $\| S_A b_{\ez}(s, \cdot)\|_{\infty} \le \| S_A b (s, \cdot)\|_{\infty}<\infty $ a.e.. So, as in \cite{re76} , for a.e.
$s\in [0,1]$ $b_{\ez}(s, \cdot )$ belongs to the Zygmund class. Then
$$
I \le C \int_0^t  \| S_A b (s, \cdot)\|_{\infty}\, |\phi_{s,\ez} (x) - \phi_s (x)| \left | \log |\phi_{s,\ez} (x) - \phi_s (x)| \right |\, ds
$$
which tends to 0 because $\displaystyle\sup_{s\in [0,T]} |\phi_{s,\ez} (x) - \phi_s (x)| \to 0$ as $\ez\to 0$. The dominated convergence theorem gives $II \to 0$ as $\ez \to 0$, because $b_{\ez}\longrightarrow b$ a.e. in $[0,T] \times \rn$ and
$$
| b_{\ez}(s,\phi_s (x) | \le C(x,T) \left\|\frac{b(s,\cdot)}{1+|\cdot|\log^+|\cdot|}\right\|_{\infty}\in L^1 ([0,T]).
$$

Equivalently to \eqref{edo1} we obtained
$$\dfrac{d}{dt} \, \phi_t(x) =b(t,\phi_t(x)),\quad \text{ a.e.  $t\in [0,T]$,} \qquad \phi_0(x)=x.$$

The uniqueness of the flow follows as a consequence that for a.e.
$t$ the vector field $b(t,\cdot)$ is in the Zygmund class and so it satisfies a quasi-Lipschitz condition (see \cite[Theorem 1.5.1]{al93}).

By the fact that a uniform limit of $K$-quasiconformal mappings is a $K$-quasiconformal mapping or a
constant (which can not happen here since $\phi_t$ satisfies the above ODE), we know that $\phi_t$ is a quasiconformal mapping with
$$K_{\phi_{t}}\le \exp\left((n-1)\int_0^{t}2\|S_Ab(s,\cdot)\|_{L^\infty}\,ds\right).
$$
\end{proof}

In order to use the inverse of the flows $\phi_t$ for the vector field $b$ being non-autonomous,we need to introduce general definitions of the
flows.
\begin{defn}
Let $b: [0,T]\times \rn\to\rn$ be a Borel vector field, and $\phi_{s,t},\,\tilde \phi_{s,t}: [0,T ]\times [0,T]\times  \rn\to\rn$ be Borel maps.

We say that $\phi_{s,t}$ is a forward flow associated to $b$ if for each $s\in [0,T]$ and almost every $x\in\rn$  the map
$t\mapsto\,|b(t, \phi_{s,t}(x))|$ belongs to $L^1(s,T )$ and
$$\phi_{s,t}(x) = x +\int_s^t b(r,\phi_{s,r}(x))\,dr.$$
We say that $\tilde \phi_{s,t}(x)$ is a backward flow associated to $b$ if for each $t\in [0,T]$ and almost every $x\in\rn$  the map
$s\mapsto\,|b(s, \tilde \phi_{s,t}(x))|$ belongs to $L^1(0,t)$ and
$$\tilde \phi_{s,t}(x) = x -\int_s^t b(r,\tilde \phi_{r,t}(x))\,dr.$$
\end{defn}
The role of $\tilde \phi_{s,t}$ is that $\tilde \phi_{s,t}$ is the inverse of $\phi_{s,t}$, i.e., $\tilde \phi_{s,t}(\phi_{s,t}(x))=x$; see for instance \cite[Theorem 2.1]{cc05}. The following result is a time dependent version of Theorem \ref{qcfl}.

\begin{thm}\label{exist-qcf}
Let $b(t,x):\,[0,T]\times\rn\mapsto \rn$ be a vector field in $L^1(0,T;W^{1,1}_\loc)$. Assume \eqref{mainlog}
and $S_Ab\in L^1(0,T;L^\infty).$ Then there exists a unique forward flow of
quasiconformal mappings  $\phi_{s,t}(x)$ satisfying
$$\phi_{s,t}(x) = x +\int_s^t b(r,\phi_{s,r}(x))\,dr.$$
and a unique backward flow  of quasiconformal mappings  $\tilde \phi_{s,t}(x)$ satisfying
$$\tilde \phi_{s,t}(x) = x -\int_s^t b(r,\tilde \phi_{r,t}(x))\,dr.$$
Moreover, for a.e. $x\in\rn$ and $0\le s\le t\le T$, it holds that
$$K_{\phi_{s,t}} \le \exp\left((n-1) \int_s^t2\|S_Ab(r,\cdot)\|_{L^\infty}\,dr\right),$$
$$K_{\tilde \phi_{s,t}} \le \exp\left((n-1) \int_s^t2\|S_Ab(r,\cdot)\|_{L^\infty}\,dr\right).$$
\end{thm}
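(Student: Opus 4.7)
The plan is to reduce Theorem \ref{exist-qcf} to Theorem \ref{qcfl} by two elementary reparametrizations: a time shift (for the forward flow) and a time reversal together with a sign change (for the backward flow). Throughout, the hypotheses (\ref{mainlog}) and $S_A b\in L^1(0,T;L^\infty)$ are invariant under these transformations, so no new analysis is required; only the bounds need to be tracked carefully.

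\emph{Step 1: the forward flow.} For fixed $s\in[0,T]$, consider the vector field $b^{(s)}(r,x):=b(r+s,x)$ defined on $[0,T-s]\times\rn$. Clearly $b^{(s)}$ satisfies the same log-growth condition and $S_Ab^{(s)}\in L^1(0,T-s;L^\infty)$ with the same norms on the shifted interval. Theorem \ref{qcfl} then yields a unique quasiconformal flow $\psi^{(s)}_r$ with $\psi^{(s)}_0(x)=x$ and $\frac{d}{dr}\psi^{(s)}_r(x)=b^{(s)}(r,\psi^{(s)}_r(x))$. Setting $\phi_{s,t}(x):=\psi^{(s)}_{t-s}(x)$ and changing the variable $r\mapsto r-s$ in the integral equation gives the required identity $\phi_{s,t}(x)=x+\int_s^t b(r,\phi_{s,r}(x))\,dr$. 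The bound on $K_{\phi_{s,t}}$ is inherited directly from the bound in Theorem \ref{qcfl} applied on $[0,t-s]$, which after undoing the shift becomes $\exp\bigl((n-1)\int_s^t 2\|S_Ab(r,\cdot)\|_{L^\infty}\,dr\bigr)$.

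\emph{Step 2: the backward flow.} Fix $t\in[0,T]$ and set $c(r,x):=-b(t-r,x)$ on $[0,t]\times\rn$. A direct verification shows that $c$ still fulfills (\ref{mainlog}) and that $\|S_Ac(r,\cdot)\|_{L^\infty}=\|S_Ab(t-r,\cdot)\|_{L^\infty}$, so $S_Ac\in L^1(0,t;L^\infty)$. Applying Theorem \ref{qcfl} to $c$ on $[0,t]$ produces a unique quasiconformal flow $\chi_\sigma$ starting from the identity. Define $\tilde\phi_{s,t}(x):=\chi_{t-s}(x)$; the substitution $u=t-r$ in the integral equation for $\chi$ converts it precisely into $\tilde\phi_{s,t}(x)=x-\int_s^t b(r,\tilde\phi_{r,t}(x))\,dr$. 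Again the $K$-bound in Theorem \ref{qcfl}, evaluated on $[0,t-s]$ for $c$, translates back to $\exp\bigl((n-1)\int_s^t 2\|S_Ab(r,\cdot)\|_{L^\infty}\,dr\bigr)$.

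\emph{Step 3: uniqueness.} For a.e.\ $r\in[0,T]$ the field $b(r,\cdot)$ (respectively $c(r,\cdot)$) lies in the Zygmund class, hence satisfies the quasi-Lipschitz estimate of \cite[Theorem 1.5.1]{al93}. This gives Osgood-type uniqueness for the ODEs governing $\phi_{s,t}$ and $\tilde\phi_{s,t}$, exactly as in the final paragraph of the proof of Theorem \ref{qcfl}.

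The main obstacle is essentially notational: verifying that the time reversal $r\mapsto t-r$ in Step 2 produces the backward integral equation with the correct sign and with $\tilde\phi_{r,t}$ (not $\tilde\phi_{s,r}$) appearing under the integral. Once this substitution is written out carefully, all invariance of norms and bounds is automatic, and everything else is a direct quotation of Theorem \ref{qcfl}.
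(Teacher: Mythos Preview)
Your proposal is correct and matches the paper's approach: the paper gives no proof of Theorem~\ref{exist-qcf} at all, simply declaring it ``a time dependent version of Theorem~\ref{qcfl},'' and your time-shift and time-reversal reparametrizations are exactly the standard way to make that reduction precise. The substitution checks in Steps~1 and~2 are accurate (in particular the backward flow indeed produces $\tilde\phi_{r,t}$ under the integral), and the uniqueness argument in Step~3 is lifted verbatim from the end of the proof of Theorem~\ref{qcfl}.
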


\section{Transport equation in $\operatorname{BMO}$}
\hskip\parindent In this section, we apply the theory of flows of
quasiconformal mappings  to the transport equation
\eqref{lineartransport} with initial value in $\operatorname{BMO}$. Recall that, a
locally integrable function $f$ is in the space $\operatorname{BMO}$, if
$$\|f\|_{\operatorname{BMO}}:=\sup_{B}\frac 1{|B|}\int_B|f-f_B|\,dx<\infty,$$
where $f_B$ denotes $\frac 1{|B|}\int_Bf\,dx$ and the supremum is taken over all open balls. In \cite{re74},
Reimann proved that

\begin{thm}\label{qc-bmo}
The $\operatorname{BMO}$ space is invariant under quasiconformal mappings of $\rn$.
Precisely, for any $K$-quasiconformal mapping $\phi$, there exists
$C=C(K,n)$ such that for any $f$ in $\operatorname{BMO}$, it holds
$$\|f\circ \phi\|_{\operatorname{BMO}}\le C\|f\|_{\operatorname{BMO}}.$$
\end{thm}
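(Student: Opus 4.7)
The plan is to reduce the invariance of $\operatorname{BMO}$ to a per-ball oscillation bound and then dualize via a change of variables. Using the standard $\inf$-characterization of the $\operatorname{BMO}$ seminorm, it suffices to exhibit, for each ball $B\subset\rn$, some constant $c_B$ with $\frac{1}{|B|}\int_B|f\circ\phi-c_B|\,dx\le C(K,n)\|f\|_{\operatorname{BMO}}$. The natural candidate is $c_B=f_{B^\sharp}$, where $B^\sharp:=B(\phi(x_B),L_\phi(x_B,r_B))$ is the smallest ball centered at $\phi(x_B)$ that contains $\phi(B)$. To make this work I would combine three classical ingredients: (i) quasisymmetric control of images, namely $L_\phi(x_B,r_B)\le H(K,n)\,\ell_\phi(x_B,r_B)$, whence $\phi(B)\subset B^\sharp$ with $|B^\sharp|\le C(K,n)\,|\phi(B)|$; (ii) Gehring's reverse H\"older inequality, yielding $p=p(K,n)>1$ with $\bigl(\frac{1}{|B|}\int_B J_\phi^p\,dx\bigr)^{1/p}\le C(K,n)\,\frac{1}{|B|}\int_B J_\phi\,dx$, and the same statement applied to $\phi^{-1}$ (which is again $K$-quasiconformal); (iii) the $L^p$ form of John--Nirenberg.

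The key computation is the change of variables $y=\phi(x)$ followed by H\"older's inequality. Concretely I would write
$$\frac{1}{|B|}\int_B|f\circ\phi(x)-f_{B^\sharp}|\,dx=\frac{|\phi(B)|}{|B|}\cdot\frac{1}{|\phi(B)|}\int_{\phi(B)}|f(y)-f_{B^\sharp}|\,J_{\phi^{-1}}(y)\,dy,$$
and apply H\"older with the exponent $p$ supplied by Gehring and its conjugate $p'$. The factor $\bigl(\frac{1}{|\phi(B)|}\int_{\phi(B)}J_{\phi^{-1}}^{p'}\bigr)^{1/p'}$, together with the prefactor $|\phi(B)|/|B|$, is absorbed by the reverse H\"older bound for $J_{\phi^{-1}}$ (using $\int_{\phi(B)}J_{\phi^{-1}}\,dy=|B|$ and the comparability $|B^\sharp|\sim|\phi(B)|$). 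The remaining oscillation factor $\bigl(\frac{1}{|\phi(B)|}\int_{\phi(B)}|f-f_{B^\sharp}|^p\,dy\bigr)^{1/p}$ is controlled on the genuine ball $B^\sharp$ via the $L^p$ John--Nirenberg inequality, producing the desired bound $C(K,n)\|f\|_{\operatorname{BMO}}$.

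The main obstacle is the bookkeeping: one must verify that a single exponent $p>1$ serves both Gehring (which fixes $p$ in terms of $K$ and $n$) and John--Nirenberg (which is true for every $p<\infty$), and that every constant tracked through the H\"older/change-of-variables step depends only on $K$ and $n$, not on $B$ or $f$. A minor subtlety is that $\phi(B)$ is generally not itself a ball, so both Gehring and John--Nirenberg are applied on the comparable ball $B^\sharp$, at the cost of constants depending only on $K$ and $n$; this is precisely where the quasisymmetry from (i) is indispensable.
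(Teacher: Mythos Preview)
The paper does not prove this theorem at all: it is quoted verbatim as a result of Reimann \cite{re74} and used as a black box in the proof of Theorem~\ref{main1}. So there is no ``paper's own proof'' to compare against; any correct argument you supply is already more than what the paper provides.

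Your outline is essentially the classical route (quasisymmetry $+$ Gehring's higher integrability of the Jacobian $+$ John--Nirenberg), and it is sound. One bookkeeping slip: you declare that Gehring supplies an exponent $p>1$ with control of $\bigl(\fint J_{\phi^{-1}}^{\,p}\bigr)^{1/p}$, but then in the H\"older step you place the exponent $p'$ on $J_{\phi^{-1}}$ and $p$ on the oscillation $|f-f_{B^\sharp}|$. These should be swapped: the Gehring exponent must sit on the Jacobian, and John--Nirenberg (valid for every finite exponent) then handles the oscillation at the conjugate $p'$. Also, when you pass from $\phi(B)$ to the genuine ball $B^\sharp$ in order to invoke Gehring for $J_{\phi^{-1}}$, you need $\fint_{B^\sharp} J_{\phi^{-1}}\,dy = |\phi^{-1}(B^\sharp)|/|B^\sharp|$ to be comparable to $|B|/|\phi(B)|$; this follows since $B\subset \phi^{-1}(B^\sharp)$ and quasisymmetry of $\phi^{-1}$ bounds $|\phi^{-1}(B^\sharp)|$ by $C(K,n)\,|B|$. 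With those two points made explicit, your argument goes through and all constants depend only on $K$ and $n$.
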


A function $u\in L^1(0,T;L^1_{\loc})$ is called a \textit{weak
solution} to \eqref{lineartransport} if for each $\varphi\in
C_c^\infty([0,T]\times\rr^n)$ with compact support in  $[0,T)\times
\rr^n$ it holds that
\begin{equation*}
\int_0^T\int_{\rr^n} u\,\dfrac{\,d \varphi}{\,d t}\,dx\,dt+
\int_{\rr^n} u_0 \,\varphi(0,\cdot)\,dx+\int_0^T\int_{\rr^n} u\,\div
(b\,\varphi)\,dx\,dt=0.
\end{equation*}

\begin{proof}[Proof of Theorem \ref{main1}] Let us first prove the existence. By Theorem \ref{exist-qcf},
 we know that there exists a unique forward flow $\phi_{s,t}(x)$ and a unique backward flow
 $\tilde \phi_{s,t}(x)$ satisfying
 $$\phi_{s,t}(x) = x +\int_s^t b(r,\phi_{s,r}(x))\,dr,$$
$$\tilde \phi_{s,t}(x) = x -\int_s^t b(r,\tilde \phi_{r,t}(x))\,dr.$$
Moreover, $\phi_{s,t}(x)$ and $\tilde \phi_{s,t}(x)$ are flows of quasi-conformal mappings.

Let $u_0\in \operatorname{BMO}$ and $u(x,t):=u_0(\tilde \phi_{0,t}(x))$ for each
$t\in [0,T]$. Since for each $t$, $\tilde\phi_{0,t}$ preserves zeros sets  of $\mathbb{R}^n$, $u(x,t)$ is well defined.
We deduce from Theorem \ref{exist-qcf} that for each fixed $t\in [0,T]$, $\tilde\phi_{0,t}$ is a
$K$-quasiconformal mapping with
$$K_{\tilde \phi_{0,t}}\le \exp\left((n-1) \int_0^t2\|S_Ab(s,\cdot)\|_{L^\infty}\,ds\right).$$
This, together with Theorem \ref{qc-bmo}, implies that $u(x,t)\in
L^\infty(0,T;\operatorname{BMO})$ with
$$\|u\|_{L^\infty(0,T;\operatorname{BMO})}\le C(T,b)\|u_0\|_{\operatorname{BMO}}.$$

Let us next show that $u$ is a weak solution to
\eqref{lineartransport}.
 Choose an arbitrary
$\varphi\in C_c^\infty(\rr^n)$ and $\psi\in C^\infty_c([0,T))$.
By the fact $\phi_{0,t}(\tilde \phi_{0,t}(x))=\tilde \phi_{0,t}(\phi_{0,t}(x))=x$,
we have
\begin{eqnarray*}
&&\int_0^T\int_{\rr^n} u\,\dfrac{\,d (\varphi\psi)}{\,d
t}\,dx\,dt\\
&&\quad=\int_0^T\int_{\rr^n} u_0(\tilde\phi_{0,t}(x))\,\dfrac{\,d \psi(t)}{\,d t}\varphi(x)\,dx\,dt\\
&&\quad=\int_0^T\int_{\rr^n} u_0(x)\,\dfrac{\,d \psi(t)}{\,d t}\varphi(\phi_{0,t}(x))J_{\phi_{0,t}}(x)\,dx\,dt\\
&&\quad=-\int_{\rr^n} u_0(x)\,\psi(0)\varphi(x)\,dx
-\int_0^T\int_{\rr^n} u_0(x)\psi(t)\,\dfrac{\,d
(\varphi(\phi_{0,t}(x))J_{\phi_{0,t}}(x))}{\,d t}\,dx\,dt.
\end{eqnarray*}
On the other hand, it holds
\begin{eqnarray*}
&&\int_0^T\int_{\rr^n} u\,\div
(b\,\varphi\psi)\,dx\,dt\\
&&\quad=\int_0^T\int_{\rr^n} u_0(\tilde \phi_{0,t}(x))\psi(t)
\,\div (b(t,x)\,\varphi(x))\,dx\,dt\\
&&\quad=\int_0^T\int_{\rr^n} u_0(x)\psi(t) \,\div
(b(t,z)\,\varphi(z))|_{z=\phi_{0,t}(x)}J_{\phi_{0,t}}(x)\,dx\,dt.
\end{eqnarray*}
By noticing that $\dfrac{d}{dt}\,J_{\phi_{0,t}}(x) = \div (b(t,\phi_{0,t}(x)) \,J_{\phi_{0,t}}(x)$ (see \cite[Proposition 3.6]{bhs08} and  \cite{a04,re76}),
one has
\begin{eqnarray*}
&&\dfrac{d}{dt} \, (\varphi(\phi_{0,t}(x))J_{\phi_{0,t}}(x)) \\
&&\quad=\nabla \varphi(\phi_{0,t}(x))\cdot
b(t,\phi_{0,t}(x))J_{\phi_{0,t}}(x)+\varphi(\phi_{0,t}(x))
\div\,b(t,\phi_{0,t}(x))J_{\phi_{0,t}}(x))\\
&&\quad=\,\div (b(t,z)\,\varphi(z))|_{z=\phi_{0,t}(x)}J_{\phi_{0,t}}(x).
\end{eqnarray*}
We can conclude that
\begin{eqnarray*}
\int_0^T\int_{\rr^n} u\,\dfrac{\,d (\varphi\phi)}{\,d
t}\,dx\,dt+\int_{\rr^n} u_0(x)\,\psi(0)\varphi(x)\,dx
+\int_0^T\int_{\rr^n} u\,\div (b\,\varphi\psi)\,dx\,dt,=0,
\end{eqnarray*}
which implies that $u(x,t)=u_0(\tilde \phi_{0,t}(x))$ is a weak solution to
\eqref{lineartransport}.

Let us prove the uniqueness. Let $u\in L^\infty(0,T;\operatorname{BMO})$ be a
solution of the transport equation with initial value $u_0=0$.
Notice that for $b$ with $S_Ab \in L^1(0,T;L^\infty)$, we have
$Db\in L^\infty(0,T;L^q_\loc)$ for each finite $q$; see
\cite[p.262]{re76}. Letting $0\le \rho\in C^\infty_c(\rn)$, and
$\rho_\epsilon=\epsilon^{-n}\rho(\cdot/\epsilon)$ for each
$\epsilon>0$, we conclude by \cite[Theorem 2.1, Lemma 2.1]{dl89}
that
$$\dfrac{\,d u_\epsilon}{\,d t}+b\cdot \nabla u_\epsilon=r_\epsilon,$$
where $u_\epsilon=u\ast\rho_\epsilon$, and $r_\epsilon\to 0$ in
$L^1(0,T;L^1_\loc)$ as $\epsilon\to 0$. Therefore, using the renormalization property of transport equation,
one has that for each
$\beta\in C^1(\rr)$ with $\beta(0)=0$ and $\beta'\in L^\infty$, it
holds
$$\dfrac{\,d \beta(u)}{\,d t}+b\cdot \nabla \beta(u)=0,$$
i.e., $\beta(u)$ is a solution of the transport equation with initial value $\beta(u_0)=0$.
For each $M>0$, let $\beta_M(t)=|t|\wedge M$ be a Lipschitz function on $\rr$.
A further approximation argument would give us that,
$\beta_M(u)=|u|\wedge M$ is a solution of the transport equation with initial value $\beta(u_0)=0$.

At this point, applying the well-posedness of the transport equation
in $L^\infty$ (see e.g. \cite[Theorem 2.2]{cjmo15}) gives us that
$\beta_M(u)=0$ for each $M>0$. Letting $M\to \infty$, we conclude
that $u=0$.
\end{proof}

\begin{rem}\rm
By the Banach-Alaoglu theorem, one has that the solution $u$ found in Theorem \ref{main1} is
continuous in time with respect to the weak-$\ast$ topology of $\operatorname{BMO}$.

If instead of $\operatorname{BMO}$ one considers the vanishing mean oscillation space ($\operatorname{VMO}$), which is defined as the closure of
compactly supported smooth functions with the $\operatorname{BMO}$ norm, then under the assumptions of Theorem \ref{main1},
for each $u_0\in \operatorname{VMO}$, one can find a unique solution $u$ in $L^\infty(0,T;\operatorname{VMO})$. Moreover, the solution $u$
is continuous in time with respect to the norm topology of $\operatorname{VMO}$, that is, $u\in C(0,T;\operatorname{VMO})$. Indeed, since
the solution $u$ is given { by $u_0( \tilde \phi_{0,t})$ as in the proof of Theorem \ref{main1}},
for each compactly supported smooth function $u_0$, it is easy to see that {$u_0(\tilde \phi_{0,s})\to u_0(\tilde \phi_{0,t})$}
uniformly as $s\to t$ and therefore,
$$\left\|u(s,\cdot)-u(t,\cdot)\right\|_{\operatorname{BMO}}\le 2\|u(s,\cdot)-u(t,\cdot)\|_{L^\infty}\to 0,\quad \mathrm{as}\ s\to t.$$
An density argument gives the desired conclusion for any initial value in $\operatorname{VMO}$.
\end{rem}


\section{Transport equation in Triebel-Lizorkin spaces}

In this section, we show that the same conclusion of Theorem
\ref{main1} holds with $\operatorname{BMO}$ replaced by certain Triebel-Lizorkin
spaces.

The following result was proved in Koskela-Yang-Zhou \cite{kyz11}.
We refer the reader to \cite{kyz11} for precise definitions of the
Triebel-Lizorkin spaces.
\begin{thm}
Let $n\ge 2$, $s\in (0,1)$ and $q\in (n/(n + s),\infty]$ Then
$\dot{F}^s_{n/s,q}(\rn)$ is invariant under quasiconformal mappings
of $\rn$.
\end{thm}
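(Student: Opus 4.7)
The plan is to use a metric/pointwise characterization of $\dot{F}^s_{n/s,q}(\rn)$ that makes the action of a quasiconformal change of variables transparent. I would first establish that, in the stated range of parameters, $f\in \dot{F}^s_{n/s,q}(\rn)$ if and only if $f$ admits a fractional Haj\l asz $s$-gradient, namely a sequence $(g_k)_{k\in\zz}$ of nonnegative measurable functions satisfying
$$|f(x)-f(y)|\le |x-y|^s (g_k(x)+g_k(y))\quad\text{whenever } 2^{-k-1}<|x-y|\le 2^{-k},$$
together with $\|(\sum_k g_k^q)^{1/q}\|_{L^{n/s}}<\fz$. The threshold $q>n/(n+s)$ is precisely what allows the Fefferman--Stein vector-valued maximal inequality to reconstruct $f$ from such a gradient, and corresponds to the range where the atomic/molecular theory of $\dot{F}^s_{p,q}$ applies.

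Next, given a $K$-quasiconformal $\phi$ and a fractional $s$-gradient $(g_k)$ of $f$, I would manufacture a gradient of $f\circ\phi$. The crucial input is the quasisymmetry of $\phi$: points at distance $\sim 2^{-k}$ from $x$ are mapped to points at distance comparable to a local scale $\Lambda(x,k)$ around $\phi(x)$, with ratios bounded by the distortion. Writing $|f(\phi(x))-f(\phi(y))|\le |\phi(x)-\phi(y)|^s(g_j(\phi(x))+g_j(\phi(y)))$ at the appropriate dyadic level $j=j(k,x)$ suggests the candidate
$$\tilde g_k(x):=\Lambda(x,k)^s\, g_{j(k,x)}(\phi(x)).$$

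To bound $\|(\sum_k \tilde g_k^q)^{1/q}\|_{L^{n/s}}$ one changes variables $u=\phi(x)$; the Jacobian $J_\phi(x)$ appears, and the critical scaling $sp=n$ combined with the pointwise estimate $\Lambda(x,k)^n\ls J_\phi(x)$ (a consequence of $|D\phi|^n\le K_\phi J_\phi$) makes $\Lambda^{sp}$ exactly absorbed by $J_\phi$. The $A_\fz$ (reverse H\"older) character of $J_\phi$ together with the quasisymmetric distortion of dyadic annuli then allows one to rearrange the sum over $k$ at the cost of a constant depending only on $K$, and to replace the shifted index $j(k,x)$ by $k$ itself.

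The hard part is the subcritical range $n/(n+s)<q\le 1$. There Minkowski's inequality is unavailable, and both the reconstruction of $f$ from $(g_k)$ and the transfer step above require the full vector-valued maximal theorem applied on the $A_\fz$-weighted space associated with $J_\phi$. This is where the lower bound $q>n/(n+s)$ is sharp: it is exactly the threshold below which such maximal estimates degenerate. Once this delicate step is controlled, the invariance of $\dot{F}^s_{n/s,q}$ follows with an operator norm depending only on $n$, $s$, $q$ and the distortion of $\phi$.
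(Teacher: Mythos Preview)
The paper does not give its own proof of this statement; it quotes the result from Koskela--Yang--Zhou \cite{kyz11} and refers the reader there for the definitions and the argument. Your proposal is, in outline, precisely the strategy of that reference: the pointwise (Haj\l asz-type) fractional $s$-gradient characterization of $\dot{F}^s_{n/s,q}$ valid for $q>n/(n+s)$, transfer of the gradient sequence under $\phi$ via quasisymmetry, the critical balance $sp=n$ that makes the local dilation factor exactly compensated by the Jacobian after change of variables, and the $A_\infty$/reverse H\"older property of $J_\phi$ to close the estimate. So you are on the same route as the source the paper cites.

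One small imprecision worth fixing if you carry this out in detail: the pointwise relation you write as $\Lambda(x,k)^n\lesssim J_\phi(x)$ is not dimensionally correct as stated. What one actually uses is a comparison of $\Lambda(x,k)^n$ with $2^{-kn}$ times an average of $J_\phi$ over $B(x,2^{-k})$ (this is where the $A_\infty$ nature of $J_\phi$ genuinely enters, not only at the end), and the reindexing step is handled not by ``replacing $j(k,x)$ by $k$'' but by observing that quasisymmetry makes $k\mapsto j(k,x)$ a map with uniformly bounded fibers, so the $\ell^q$-sum over $k$ is controlled by the $\ell^q$-sum over $j$. With these adjustments your sketch matches the Koskela--Yang--Zhou argument.
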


Applying the above theorem and the well-known fact that
$\dot{W}^{1,n}(\rn)$ is also quasiconformally invariant (cf.
\cite{kyz11}), similar to the proof of Theorem \ref{main1} we can
conclude the following result, whose proof will be omitted.

\begin{thm}\label{wp-tran-tls}
Let $b(t,x):\,[0,T]\times\rn\mapsto \rn$ be a vector field in $L^1(0,T;W^{1,1}_\loc)$. Assume that $b$ satisfies
\eqref{mainlog}
and $S_Ab\in L^1(0,T;L^\infty).$ Then
\begin{enumerate}
 \item[(i)] for each $u_0\in \dot{F}^s_{n/s,q}(\rn)$, $s\in (0, 1)$ and
$q\in (n/(n + s),\infty]$, there exists a unique solution $u\in
L^\infty(0,T;\dot{F}^s_{n/s,q}(\rn))$ of \eqref{lineartransport}.
\item[(ii)] for each $u_0\in \dot{W}^{1,n}(\rn)$, there exists a unique solution
$u\in L^\infty(0,T;\dot{W}^{1,n}(\rn))$ of \eqref{lineartransport}.
\end{enumerate}
\end{thm}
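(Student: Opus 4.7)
The plan is to mimic the proof of Theorem \ref{main1} step by step, substituting the role of Reimann's quasiconformal invariance of $\operatorname{BMO}$ (Theorem \ref{qc-bmo}) by the corresponding invariance of $\dot{F}^s_{n/s,q}(\rn)$ from \cite{kyz11} and of $\dot{W}^{1,n}(\rn)$, which is classical. The only spot where the choice of functional space entered the proof of Theorem \ref{main1} was in estimating the norm of the composition $u_0\circ\tilde\phi_{0,t}$, and both alternative spaces are quasiconformally invariant with the same flavour of quantitative bound.

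For existence, first invoke Theorem \ref{exist-qcf} to produce the unique forward and backward flows $\phi_{s,t}$, $\tilde\phi_{s,t}$ of quasiconformal maps associated to $b$, with
\[
K_{\tilde\phi_{0,t}}\le \exp\Bigl((n-1)\int_0^t 2\|S_Ab(s,\cdot)\|_{L^\infty}\,ds\Bigr).
\]
Then set $u(t,x):= u_0(\tilde\phi_{0,t}(x))$; this is well defined pointwise a.e.\ because $\tilde\phi_{0,t}$ preserves null sets. The quasiconformal invariance results (Koskela--Yang--Zhou for case (i), the standard one for case (ii)) immediately give
\[
\|u(t,\cdot)\|_X\le C(K_{\tilde\phi_{0,t}})\,\|u_0\|_X,
\]
with $X=\dot{F}^s_{n/s,q}(\rn)$ or $X=\dot{W}^{1,n}(\rn)$, hence $u\in L^\infty(0,T;X)$ with the stated bound. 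To check that this $u$ is a weak solution, the calculation is exactly the one carried out in the proof of Theorem \ref{main1}: for any $\varphi\psi\in C_c^\infty([0,T)\times\rn)$ perform the change of variables $x\mapsto \phi_{0,t}(x)$, integrate by parts in $t$, and substitute the Jacobian evolution identity $\frac{d}{dt}J_{\phi_{0,t}}(x)=\mathrm{div}\,b(t,\phi_{0,t}(x))\,J_{\phi_{0,t}}(x)$ from \cite{bhs08}. The precise smoothness of $u_0$ is not used here, only local integrability, which holds in both cases (modulo constants).

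For uniqueness the simplest route is to reduce to Theorem \ref{main1}. The homogeneous Sobolev space $\dot{W}^{1,n}(\rn)$ embeds continuously into $\operatorname{BMO}$ by the critical Sobolev embedding, and $\dot{F}^s_{n/s,q}(\rn)$ with $s\in(0,1)$ and $q\in(n/(n+s),\infty]$ likewise embeds into $\operatorname{BMO}=\dot{F}^0_{\infty,2}$ by standard Triebel--Lizorkin embeddings. Hence any $u\in L^\infty(0,T;X)$ solving \eqref{lineartransport} with $u_0=0$ is in particular a weak solution in $L^\infty(0,T;\operatorname{BMO})$ with zero initial datum, and the uniqueness part of Theorem \ref{main1} forces $u\equiv 0$. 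If one prefers a self-contained argument, the renormalization scheme used in Theorem \ref{main1} applies verbatim: mollify, use the DiPerna--Lions commutator lemma (available because $Db\in L^\infty(0,T;L^q_{\loc})$ for every finite $q$ by \cite[p.\,262]{re76}), compose with $\beta_M(\cdot)=|\cdot|\wedge M$ to reduce to the $L^\infty$ well-posedness of \eqref{lineartransport}, and finally let $M\to\infty$.

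The only mildly delicate point is the embedding step for uniqueness: one has to make sure the endpoint parameters $q\in(n/(n+s),\infty]$ are all covered by the embedding into $\operatorname{BMO}$, otherwise the renormalization route must be used. Everything else is a direct transcription of the arguments already worked out for Theorem \ref{main1}, which is why the authors omit the details.
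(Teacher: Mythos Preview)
Your proposal is correct and follows exactly the approach the paper indicates: the authors explicitly say the proof is \emph{omitted} because it is ``similar to the proof of Theorem \ref{main1}'', and what you wrote is precisely that transcription, replacing Theorem \ref{qc-bmo} by the Koskela--Yang--Zhou invariance (and the classical $\dot W^{1,n}$ invariance). Your uniqueness argument via the embedding into $\operatorname{BMO}=\dot F^0_{\infty,2}$ is a clean shortcut; the Jawerth-type embedding $\dot F^s_{n/s,q}\hookrightarrow \dot F^0_{\infty,2}$ holds for all $q\in(0,\infty]$ (the $q$-index is irrelevant in the Sobolev-type embedding with $p_0<p_1$), so the ``mildly delicate point'' you flag is in fact not an issue.
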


\subsection*{Acknowledgement}
\hskip\parindent A. Clop, J. Mateu and J. Orobitg were partially
supported by Generalitat de Catalunya (2014SGR75) and Ministerio de
Economia y Competitividad (MTM2013-44699). A. Clop was partially supported by
the Programa Ram\'on y Cajal (Spain).
R. Jiang was partially supported by National Natural Science Foundation of China (NSFC
11301029). All authors were partially supported by Marie Curie Initial Training Network MAnET (FP7-607647).

\noindent Albert Clop, Joan Mateu and Joan Orobitg

\vspace{0.1cm} \noindent Department of Mathematics, Faculty of
Sciences, Universitat Aut\`onoma de Barcelona, 08193 Bellaterra
(Barcelona), Catalunya

\vspace{0.3cm}

\noindent Renjin Jiang

\vspace{0.1cm}
\noindent
School of Mathematical Sciences, Beijing Normal University, Laboratory of Mathematics and Complex Systems,
Ministry of Education, 100875, Beijing, CHINA

and

\noindent Department of Mathematics, Faculty of Sciences,
Universitat Aut\`onoma de Barcelona, 08193 Bellaterra (Barcelona),  Catalunya

\vspace{0.2cm}
\noindent{\it E-mail addresses}:\\
\texttt{albertcp@mat.uab.cat}\\
\texttt{rejiang@bnu.edu.cn}\\
\texttt{mateu@mat.uab.cat}\\
\texttt{orobitg@mat.uab.cat}
\end{document}